\newcommand{\bh}[1] {\mathcal{B}(\mathcal{#1})}
\newcommand{\diag} {\operatorname{diag}}
\newtheorem{theorem}{Theorem}[section]
\newtheorem{corollary}[theorem]{Corollary}
\newtheorem{lemma}[theorem]{Lemma}
\theoremstyle{definition}
\newtheorem{definition}[theorem]{Definition}
\newtheorem{example}[theorem]{Example}
\newtheorem{remark}[theorem]{Remark}
\begin{document}

\title{Normal Extensions of Representations of Abelian Semigroups}
\author{Boyu Li}
\address{Pure Mathematics Department\\University of Waterloo\\Waterloo, ON\\Canada \ N2L--3G1}
\email{b32li@math.uwaterloo.ca}
\date{\today}

\subjclass[2010]{47B20, 47A20, 47D03}
\keywords{subnormal operator, normal extension, regular dilation, lattice ordered semigroup}

\begin{abstract} A commuting family of subnormal operators need not have a commuting normal extension. We study when a representation on an abelian semigroup can be extended to a normal representation, and show that it suffices to extend the set of generators to commuting normals. We also extend a result due to Athavale to representations on abelian lattice ordered semigroups. 
\end{abstract}

\maketitle

\section{Introduction}
 
An operator $T\in\bh{H}$ is called subnormal if there exists a normal extension $N\in\bh{K}$ where $\mathcal{H}\subseteq\mathcal{K}$ and $N|_\mathcal{H}=T$. There are many equivalent conditions for an operator being subnormal, for example, Agler showed a contractive operator $T$ is subnormal if and only if for any $n\geq 0$, $$\sum_{j=0}^n (-1)^j {n\choose j} T^{*j}T^j\geq 0.$$
One may refer to \cite[Chapter II]{SubnormalBookConway} for many other characterizations of subnormal operators.

A commuting pair of subnormal operators $T_1, T_2\in\bh{H}$ might not have commuting normal extensions \cite{Lubin1978, Abrahamse1978}, and a necessary and sufficient condition was given by It{\^o} in \cite{Ito1958}. Athavale obtained a necessary and sufficient condition for $n$ commuting operators $T_1,\cdots,T_n\in\bh{H}$ to have commuting normal extensions in terms of operator polynomials \cite{A1987,AP1990}.

This paper consider the question as to when a contractive representation of a unital abelian semigroup can be extended to a contractive normal representation. Athavale's result can be applied to the set of generators, and obtain a map that sends the semigroup into a family of commuting normal operators. Our first result shows that such normal map guarantees the existence of a normal representation. 

It is also observed that Athavale's result is equivalent to a certain representation being regular, and we further extend Athavale's result to abelian lattice ordered semigroups. 

\section{Commuting Normal Extension}

For an operator $T\in\bh{H}$, an operator $S\in\bh{K}$ extends $T$ ($S$ is called an extension of $T$) if it acts on a larger Hilbert space $\mathcal{K}\supseteq\mathcal{H}$ such that $S|_\mathcal{H}=T$. In other words, $S$ has the form $$S=\begin{bmatrix} T & * \\ 0 & *\end{bmatrix}.$$
An operator $T\in\bh{H}$ is called subnormal if it has a normal extension. Among many characterizations of subnormal operators, Agler \cite{Agler1985} showed a contractive operator $T$ is subnormal if and only if for any $n\geq 0$, $$\sum_{j=0}^n (-1)^j {n\choose j} T^{*j}T^j\geq 0.$$
However, a commuting pair of subnormal operators need not have a commuting pair of normal extension \cite{Lubin1978, Abrahamse1978}. It{\^o} \cite{Ito1958} established a necessary and sufficient condition for a commuting family of subnormal operators to have commuting normal extensions. Athavale \cite{A1987} generalized Agler's result to a family of commuting contractions:
\begin{theorem}[Athavale]\label{thm.Athavale} Let $T=(T_1,T_2,\cdots,T_m)$ be a family of $m$ commuting contractions. Then $T$ has a commuting normal extension $N$ if and only if for any $n_1,n_2,\cdots,n_m\geq 0$, we have
\begin{equation}\label{eq.athavale}\sum_{0\leq k_i\leq n_i} (-1)^{k_1+k_2+\cdots+k_m} {n_1\choose k_1}\cdots{n_m\choose k_m} T_1^{*k_1}T_2^{*k_2}\cdots T_m^{*k_m}T_m^{k_m}\cdots T_1^{k_1}\geq 0.\tag{$\star$}\end{equation}
\end{theorem}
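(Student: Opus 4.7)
The plan is to prove the two directions separately, with the bulk of the effort going into sufficiency.

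\emph{Necessity.} Assume $T=(T_1,\ldots,T_m)$ extends to a commuting family of normals $N=(N_1,\ldots,N_m)$ on some $\mathcal{K}\supseteq\mathcal{H}$. Standard facts about the joint minimal normal extension (together with polynomial convexity of $\overline{\mathbb{D}}^m$) allow us to assume that each $N_i$ is contractive. Because $N_i\mathcal{H}\subseteq\mathcal{H}$, the complement $\mathcal{H}^\perp$ is $N_i^*$-invariant for every $i$, which lets one slide a compression through to obtain
$$T_1^{*k_1}\cdots T_m^{*k_m}T_m^{k_m}\cdots T_1^{k_1}=P_{\mathcal{H}}\,N_1^{*k_1}\cdots N_m^{*k_m}N_m^{k_m}\cdots N_1^{k_1}\big|_{\mathcal{H}}.$$
By Fuglede--Putnam, commuting normals also have commuting adjoints, so the entire family $\{N_i,N_j^*\}$ commutes pairwise, and the right side regroups as $P_{\mathcal{H}}\prod_{i=1}^{m}(I-N_i^{*}N_i)^{n_i}\big|_{\mathcal{H}}$. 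The joint spectral theorem rewrites the interior operator as $\int \prod_i (1-|z_i|^2)^{n_i}\,dE(z)$ against a spectral measure $E$ supported in $\overline{\mathbb{D}}^m$; positivity is then immediate from non-negativity of the integrand and is preserved under compression to $\mathcal{H}$.

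\emph{Sufficiency.} For this direction I would adopt an Arveson-style dilation strategy. The positivity data encoded in (\ref{eq.athavale}) is precisely what should allow one to define a unital completely positive map $\varphi$ from the operator system generated by the coordinate functions $z_1,\ldots,z_m$ inside $C(\overline{\mathbb{D}}^m)$ into $\bh{H}$, sending an appropriately ordered monomial $\bar z^\alpha z^\beta$ to $T^{*\alpha}T^\beta$. Applying Arveson's extension theorem to extend $\varphi$ to all of $C(\overline{\mathbb{D}}^m)$ and then Stinespring's dilation theorem to the resulting c.p. map produces a $*$-representation $\pi:C(\overline{\mathbb{D}}^m)\to \bh{K}$ on a larger space $\mathcal{K}\supseteq\mathcal{H}$. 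The images $N_i=\pi(z_i)$ are then commuting normal contractions whose compressions to $\mathcal{H}$ recover the $T_i$ by construction, giving the desired normal extension.

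The main obstacle is the very first step: showing that (\ref{eq.athavale}) truly produces a well-defined completely positive map. The subtlety is that $T_i^*$ and $T_j$ do not commute, so the assignment $\bar z^\alpha z^\beta\mapsto T^{*\alpha}T^{\beta}$ depends on an ordering choice, and one must verify that the specific ordering suggested by (\ref{eq.athavale}) yields a consistent positive extension to \emph{all} trigonometric polynomials in $z_1,\bar z_1,\ldots,z_m,\bar z_m$. A more hands-on alternative, mimicking It\^o's treatment of the case $m=2$, is to induct on $m$: apply Agler's one-variable theorem (the $m=1$ instance of (\ref{eq.athavale})) to extend $T_m$ to a normal $N_m$ on an enlarged space, then lift $T_1,\ldots,T_{m-1}$ to operators commuting with $N_m$ that still satisfy the $(m-1)$-variable version of (\ref{eq.athavale}), to which the inductive hypothesis applies. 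In either approach, the technical heart is propagating the full hierarchy of positivity conditions through the dilation.
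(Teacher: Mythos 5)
First, a point of orientation: the paper does not prove this theorem at all --- it is imported from Athavale \cite{A1987} as a known result. (The paper does, however, effectively reprove it later by a completely different route: Lemma \ref{lm.motive} identifies Condition (\ref{eq.athavale}) with Brehmer's regularity criterion for the representation $T^\infty$ of $\mathbb{N}^{m\times\infty}$, and Theorem \ref{thm.main} converts regularity of $T^\infty$ into a normal extension via an averaging/WOT-compactness argument. So your proposal should be judged on its own merits.)

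Your necessity argument is essentially complete and correct: invariance of $\mathcal{H}$ lets you replace $T_i$ by $N_i$ inside each summand, Fuglede lets you regroup the sum as the compression of $\prod_i (I-N_i^*N_i)^{n_i}$, and positivity follows since the $I-N_i^*N_i$ are commuting positives (you do need the extension to be contractive, but your appeal to the minimal normal extension handles that). The sufficiency direction, however, contains a genuine gap, and you have named it yourself: everything hinges on showing that the assignment $\bar z^\alpha z^\beta\mapsto T^{*\alpha}T^\beta$ is a \emph{positive} map on the trigonometric polynomials in $C(\overline{\mathbb{D}}^m)$, and you offer no argument for this. That step is not a formality --- it is the entire analytic content of the theorem. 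The known way to extract it from (\ref{eq.athavale}) is to observe that $(\star)$ says the multisequence $k\mapsto T^{*k_1}_1\cdots T_m^{*k_m}T_m^{k_m}\cdots T_1^{k_1}$ is completely monotone, invoke an operator-valued Hausdorff moment theorem to represent it by a positive operator measure on $[0,1]^m$, and use that representation to verify the positive-definiteness hypotheses of a Sz.-Nagy-type $*$-semigroup dilation theorem (the paper's Theorem \ref{thm.principal}); Lemma \ref{lm.1} then upgrades the dilation to an extension. None of this is present in your sketch. Your fallback --- inducting on $m$ by extending $T_m$ to a normal $N_m$ and then ``lifting $T_1,\ldots,T_{m-1}$ to operators commuting with $N_m$ that still satisfy the $(m-1)$-variable version of (\ref{eq.athavale})'' --- is not a repair: that lifting step is exactly where the subject's pathologies live (commuting subnormals need not admit commuting normal extensions \cite{Lubin1978,Abrahamse1978}), and there is no reason the hypothesis propagates through such a lift. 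As it stands, the hard half of the theorem is asserted rather than proved.
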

One may observe that a family of $m$ commuting contractions defines a contractive representation $T:\mathbb{N}^m\to\bh{H}$ that sends each generator $e_i$ to $T_i$. A commuting normal extension $N=(N_1,\cdots,N_m)$ can be seen as a contractive normal representation $N:\mathbb{N}^m\to\bh{K}$ that extends $T$. Athavale's result gives a necessary and sufficient condition for the existence of a normal representation that extends $T$. If $P$ is a unital abelian semigroup and $T:P\to\bh{H}$ is a contractive representation, we may also ask the question when there exists a normal representation $N:P\to\bh{K}$ that extends $T$. 
\begin{example}\label{ex.1} Consider $P=\mathbb{N}\backslash\{1\}$ which is a unital semigroup generated by $2$ and $3$. A contractive representation $T:P\to\bh{H}$ is uniquely determined by $T(2),T(3)$, which satisfies $T(2)^3=T(3)^2$. We may use Theorem \ref{thm.Athavale} to test if $T(2),T(3)$ has commuting normal extensions $N_2,N_3$. However, even if they do have such extensions, there is no guarantee that $N_2^3=N_3^2$ and therefore it is not clear if we can get a normal representation $N:P\to\bh{K}$ that extends $T$. Nevertheless, since $N_2,N_3$ extend $T(2),T(3)$ respectively, we may define a normal map $N:P\to\bh{K}$ using $N_2,N_3$ such that $\{N(p)\}_{p\in P}$ is a family of commuting normal operators where $N(p)$ extends $T(p)$. As we shall see soon, in Theorem \ref{thm.normalmap}, the existence of such normal map guarantees a normal representation that extend $T$.

We shall also note that this semigroup $P=\mathbb{N}\backslash\{1\}$ is closely related to the so-called Neil algebra $\mathcal{A}=\{f\in A(\mathbb{D}): f'(0)=0\}$. Dilation on Neil algebra has been studied in \cite{DJM2015, Broschinski2013}. Unlike $\mathbb{N}$ where every contractive representation has a unitary dilation due to Sz.Nagy's dilation, contractive representations of $P$ may not have a unitary dilation. Even so, for a contractive representation $T:P\to\bh{H}$, we may apply Ando's theorem to dilate $T(2),T(3)$ into commuting unitaries $U_2,U_3$, and therefore there exists a family $\{U_n\}_{n\in P}$ of commuting unitaries where $P_\mathcal{H} U_n|_\mathcal{H} = T(n)$ for each $n$ \cite[Example 2.4]{DJM2015}. However, existence of such unitary maps does not guarantees a unitary dilation of $T$. 
\end{example}
One of the main tools for the proof is the involution semigroup. Sz.Nagy used such a technique and proved a subnormality condition of a single operator due to Halmos \cite{NagyFABookAppendix}, and Athavale also used this technique in \cite{A1987}. We shall extend this technique to a more general setting.
\begin{definition} A semigroup $P$ is called an involution semigroup (or a $*$-semigroup) if there is an involution $*:P\to P$ that satisfies $p^{**}=p$ and $(pq)^*=q^* p^*$. 
\end{definition}

For example, any group $G$ can be seen as an involution semigroup where $g^*=g^{-1}$. Any abelian semigroup can be seen as involution semigroup where $p^*=p$. A representation $D$ of a unital involution semigroup $P$ is a unital $*$-homomorphism. It is obvious that if $p p^*=p^* p$, then $D(p)$ is normal. Sz.Nagy established a condition which guarantees that a map on an involution semigroup has a dilation to a representation of the semigroup \cite{NagyFABookAppendix}.

\begin{theorem}\label{thm.principal} Let $P$ be a $*$-semigroup and $T:P\to\bh{H}$ satisfies the following conditions:
\begin{enumerate}[(i)]
\item $T(e)=I, T(p^*)=T(p)^*$,
\item For any $p_1,\cdots,p_n\in P$, the operator matrix $[T(p_i^* p_j)]$ is positive, 
\item There exists a constant $C_a>0$ for each $a\in P$ such that for all $p_1,\cdots,p_n\in P$, $$[T(p_i^* a^* a p_j)]\leq C_a^2 [T(p_i^* p_j)].$$
\end{enumerate}
Then, there exists a representation $D:P\to\bh{K}$ that satisfies $T(p)=P_\mathcal{H} D(p)|_\mathcal{H}$ and $\|D(p)\|\leq C_p$.
\end{theorem}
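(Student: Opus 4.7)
The plan is to carry out a GNS-type construction on a Hilbert space built from formal combinations of elements of $P$ with vectors in $\mathcal{H}$. First I would form the vector space $V$ of finitely supported $\mathcal{H}$-valued functions on $P$, written as finite sums $\sum_i \delta_{p_i}\otimes h_i$. On $V$ I would define the sesquilinear form
$$\Big\langle \sum_i \delta_{p_i}\otimes h_i,\ \sum_j \delta_{q_j}\otimes k_j \Big\rangle \;=\; \sum_{i,j}\langle T(q_j^* p_i) h_i, k_j\rangle.$$
Hypothesis (ii) says exactly that this form is positive semi-definite. Let $N\subseteq V$ be its null space, and let $\mathcal{K}$ be the Hilbert space completion of $V/N$.

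Next I would define $D(a)$ on $V$ by $D(a)(\delta_p\otimes h)=\delta_{ap}\otimes h$ and check that it is bounded with $\|D(a)\|\leq C_a$. For $v=\sum_i \delta_{p_i}\otimes h_i$, the square norm of $D(a)v$ equals $\sum_{i,j}\langle T(p_j^* a^* a p_i)h_i,h_j\rangle$, which by (iii) is at most $C_a^2\sum_{i,j}\langle T(p_j^* p_i)h_i,h_j\rangle = C_a^2\|v\|^2$. Thus $D(a)$ sends $N$ into $N$ and descends to a bounded operator on $\mathcal{K}$ of norm at most $C_a$.

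To see that $D$ is a unital $*$-homomorphism, I would compute $D(ab)(\delta_p\otimes h)=\delta_{abp}\otimes h=D(a)D(b)(\delta_p\otimes h)$ directly, use $D(e)(\delta_p\otimes h)=\delta_p\otimes h$, and verify the adjoint identity
$$\langle D(a)v,w\rangle \;=\; \sum_{i,j}\langle T(q_j^* a p_i) h_i, k_j\rangle \;=\; \langle v, D(a^*) w\rangle,$$
which follows because $(q_j^* a p_i)^* = p_i^* a^* q_j$ and hypothesis (i) gives $T(p^*)=T(p)^*$. This yields $D(a)^*=D(a^*)$.

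Finally, I would embed $\mathcal{H}$ into $\mathcal{K}$ by $h\mapsto \delta_e\otimes h$; hypothesis (i) makes this isometric since $\langle \delta_e\otimes h,\delta_e\otimes h\rangle = \langle T(e)h,h\rangle = \|h\|^2$. For $h,k\in\mathcal{H}$,
$$\langle D(p)h, k\rangle_\mathcal{K} \;=\; \langle \delta_p\otimes h,\delta_e\otimes k\rangle \;=\; \langle T(e^* p)h, k\rangle_\mathcal{H} \;=\; \langle T(p)h,k\rangle_\mathcal{H},$$
so $P_\mathcal{H} D(p)|_\mathcal{H}=T(p)$, completing the construction. The main obstacle is the boundedness step: condition (iii) must be read as a genuine inequality of operator matrices paired against a vector $(h_1,\dots,h_n)$ in $\mathcal{H}^n$, rather than pointwise on $P$, and this is precisely what forces the constant $C_a$ to control $\|D(a)\|$ on the quotient.
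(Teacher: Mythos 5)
Your proposal is correct: the paper states this theorem without proof, citing Sz.-Nagy's appendix, and your GNS-type construction (positive form on finitely supported $\mathcal{H}$-valued functions via (ii), boundedness of the left translation operators via (iii), adjoints via the involution, and the embedding $h\mapsto\delta_e\otimes h$ giving the compression property) is exactly the standard argument behind the cited result. All the key verifications you indicate do go through as claimed, so there is nothing to add.
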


Now let $P$ be a unital abelian semigroup and consider $Q=\{(p,q):p,q\in P\}$. $Q$ is a unital semigroup under the point-wise semigroup operation $$(p_1,q_1)+(p_2,q_2)=(p_1+p_2,q_1+q_2).$$ 
Define a involution operation of $Q$ by $(p,q)^*=(q,p)$, which turns $Q$ into an involution semigroup. Notice since $P$ is abelian, $Q$ is also abelian. Moreover, any element $(p,q)=(0,q)+(0,p)^*$. If $D:Q\to\bh{K}$ is a representation, then $$D(0,p)^* D(0,p)=D(p,p)=D(0,p)D(0,p)^*,$$ and therefore $D(0,p)$ is normal. 

\begin{lemma}\label{lm.1} Let $T\in\bh{H}$ and $N\in\bh{K}$ where $\mathcal{H}$ is a subspace of $\mathcal{K}$. Suppose $T=P_\mathcal{H} N|_\mathcal{H}$ and $T^* T= P_\mathcal{H} N^*N|_\mathcal{H}$, then $N$ is an extension of $T$.
\end{lemma}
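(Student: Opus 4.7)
\medskip

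\noindent\textbf{Plan.} The idea is to decompose $N$ as a $2\times 2$ operator matrix with respect to $\mathcal{K}=\mathcal{H}\oplus\mathcal{H}^\perp$ and read off what the two hypotheses force on each block. An extension of $T$ is exactly an operator whose $(1,1)$ block is $T$ \emph{and} whose $(2,1)$ block vanishes (so that $\mathcal{H}$ is $N$-invariant with $N|_\mathcal{H}=T$), so the target is to show that the lower-left block must be zero.

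Concretely, I would write
\[
N=\begin{bmatrix} A & B \\ C & D\end{bmatrix},
\]
where $A\in\bh{H}$, $D\in\mathcal{B}(\mathcal{H}^\perp)$, and $B,C$ are the off-diagonal pieces. The first hypothesis $T=P_\mathcal{H} N|_\mathcal{H}$ immediately identifies $A=T$. Then I would compute the $(1,1)$ block of $N^*N$: a direct block multiplication gives
\[
P_\mathcal{H}\, N^*N\,|_\mathcal{H} \;=\; A^*A + C^*C \;=\; T^*T + C^*C.
\]
Comparing with the second hypothesis $T^*T=P_\mathcal{H} N^*N|_\mathcal{H}$ yields $C^*C=0$, and since $C^*C\geq 0$ this forces $C=0$.

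Once $C=0$, the subspace $\mathcal{H}$ is invariant under $N$, and for $h\in\mathcal{H}$ we have $Nh=Ah=Th$, so $N|_\mathcal{H}=T$ as required. There is no real obstacle here; the only thing to be careful about is the standard identification that $C^*C=0$ implies $C=0$ (from $\|Ch\|^2=\langle C^*Ch,h\rangle$), which is routine. The lemma is essentially the observation that the ``missing'' information needed to upgrade $T=P_\mathcal{H} N|_\mathcal{H}$ from a compression to an extension is precisely the equality $\|Nh\|=\|Th\|$ for $h\in\mathcal{H}$, and this is exactly what the second hypothesis delivers.
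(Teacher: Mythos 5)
Your proof is correct and is essentially the paper's argument in block-matrix notation: the paper shows $\|Nh\|=\|Th\|=\|P_\mathcal{H}Nh\|$ for $h\in\mathcal{H}$ and concludes that the component of $Nh$ orthogonal to $\mathcal{H}$ vanishes, which is exactly your identity $P_\mathcal{H}N^*N|_\mathcal{H}=T^*T+C^*C$ forcing $C=0$. Your packaging is, if anything, slightly more direct than the paper's supremum computation of $\|P_\mathcal{H}Nh\|$.
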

\begin{proof}
From the conditions, we have for any $h\in\mathcal{H}$, $\|Th\|^2=\langle Th, Th\rangle=\langle T^* Th,h\rangle$. Since $T^* T= P_\mathcal{H} N^*N|_\mathcal{H}$, $\langle T^* Th,h\rangle=\langle N^* Nh,h\rangle=\|Nh\|^2$. 

On the other hand, $\|Th\|=\sup_{\|k\|\leq 1, k\in\mathcal{H}} \langle Th, k\rangle$. But $T=P_\mathcal{H} N|_\mathcal{H}$, and thus $\langle Th,k\rangle=\langle Nh, k\rangle$. Therefore, 
\begin{align*}
\|Th\|& =\sup_{\|k\|\leq 1, k\in\mathcal{H}}\langle Th, k\rangle \\
&= \sup_{\|k\|\leq 1, k\in\mathcal{H}}\langle Nh, k\rangle \\
&= \|P_\mathcal{H} Nh \|
\end{align*}

Therefore, $\|Th\|=\| Nh\|=\|P_\mathcal{H} Nh\|$ and thus $\mathcal{H}$ is invariant for $N$. Hence, $N$ is an extension of $T$.\end{proof}
\begin{theorem}\label{thm.normalmap} Let $P$ be any unital abelian semigroup and let $T:P\to\bh{H}$ be a unital contractive representation of $P$. Then the following are equivalent:
\begin{enumerate}[(i)]
\item There exists a contractive normal map $N:P\to\bh{K}$ that extends $T$, where the family $\{N(p)\}_{p\in P}$ is a commuting family of normal operators.
\item There exists a contractive normal representation $N:P\to\bh{L}$ that extends $T$. 
\end{enumerate}
\end{theorem}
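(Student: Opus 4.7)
The direction (ii) $\Rightarrow$ (i) is immediate, since any normal representation is in particular a normal map to a commuting family. The substantive direction is (i) $\Rightarrow$ (ii), and my plan is to apply the dilation theorem (Theorem \ref{thm.principal}) to the abelian involution semigroup $Q=P\times P$ described just before Lemma \ref{lm.1}.

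The key construction is to turn the commuting normal map $N:P\to\bh{K}$ into a representation of $Q$. Define $M:Q\to\bh{K}$ by $M(p,q)=N(p)^{*}N(q)$. Because $\{N(p)\}$ is a commuting family of normal operators, Fuglede's theorem gives $N(p)^{*}N(q)=N(q)N(p)^{*}$ for all $p,q$, so a direct computation shows $M$ is a unital $*$-homomorphism on $Q$. Now define
\[
\tilde{T}(p,q)=P_{\mathcal{H}}M(p,q)|_{\mathcal{H}}=P_{\mathcal{H}}N(p)^{*}N(q)|_{\mathcal{H}}.
\]
I would then verify the three hypotheses of Theorem \ref{thm.principal} for $\tilde{T}$ on $Q$: conditions (i) are clear from $N(0)=I$ and the $*$-property of $M$; condition (ii) drops out of the identity
\[
\sum_{i,j}\langle\tilde{T}(\alpha_{i}^{*}\alpha_{j})h_{j},h_{i}\rangle=\Bigl\|\sum_{j}M(\alpha_{j})h_{j}\Bigr\|^{2},
\]
since $M$ is a $*$-homomorphism; and condition (iii) is obtained with $C_{a}=\|M(a)\|\le 1$ by inserting $M(a)$ into the same sum.

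Theorem \ref{thm.principal} then produces a representation $D:Q\to\bh{L}$ with $\tilde{T}(p,q)=P_{\mathcal{H}}D(p,q)|_{\mathcal{H}}$. Setting $N'(p)=D(0,p)$ gives a representation of $P$ into $\bh{L}$; since $Q$ is abelian and $(0,p)^{*}+(0,p)=(p,p)=(0,p)+(0,p)^{*}$, each $N'(p)$ is normal and the family commutes. Contractivity follows from the bound $\|D(0,p)\|\le C_{(0,p)}=\|N(p)\|\le 1$ provided by Theorem \ref{thm.principal}.

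The final, and most delicate, step is to upgrade the compression $T(p)=P_{\mathcal{H}}N'(p)|_{\mathcal{H}}$ (which follows from $\tilde{T}(0,p)=T(p)$) to the statement that $N'$ \emph{extends} $T$. For this I invoke Lemma \ref{lm.1}: I need $T(p)^{*}T(p)=P_{\mathcal{H}}N'(p)^{*}N'(p)|_{\mathcal{H}}$. Expanding, $P_{\mathcal{H}}N'(p)^{*}N'(p)|_{\mathcal{H}}=P_{\mathcal{H}}D(p,p)|_{\mathcal{H}}=\tilde{T}(p,p)=P_{\mathcal{H}}N(p)^{*}N(p)|_{\mathcal{H}}$, and because $N(p)$ already extends $T(p)$ the subspace $\mathcal{H}$ is invariant under $N(p)$, so this compression equals $T(p)^{*}T(p)$. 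The main obstacle I expect is precisely this last chain: one must resist the temptation to declare the result once $T(p)$ is a compression of $N'(p)$, and instead remember that for normal maps a compression identity alone does not yield invariance---this is exactly why Lemma \ref{lm.1} was set up, and the hypothesis in (i) that $N$ truly extends $T$ (not merely compresses) is what feeds the equality $\tilde{T}(p,p)=T(p)^{*}T(p)$ needed to close the argument.
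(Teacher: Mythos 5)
Your overall strategy is the paper's: pass to the involution semigroup $Q=P\times P$, feed $\tilde{T}(p,q)=T(p)^*T(q)$ into Theorem \ref{thm.principal}, define the candidate normal representation via $D(0,p)$, and close the loop with Lemma \ref{lm.1}. The final step of your argument (using $\tilde{T}(p,p)=T(p)^*T(p)$ and Lemma \ref{lm.1} to upgrade the compression to an extension) is exactly right and matches the paper.

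However, there is a genuine gap in the middle: the claim that ``a direct computation shows $M$ is a unital $*$-homomorphism on $Q$,'' where $M(p,q)=N(p)^*N(q)$, is false. Multiplicativity of $M$ would require $N(p_1+p_2)=N(p_1)N(p_2)$, but the hypothesis in (i) only gives a \emph{map} $N$ whose values are commuting normals extending the $T(p)$ --- it is precisely the failure of $N$ to be multiplicative that makes the theorem nontrivial (see Example \ref{ex.1}, where $N(6)$ need not equal $N_2^3$ or $N_3^2$ coherently). Consequently your verification of conditions (ii) and (iii) of Theorem \ref{thm.principal}, which rests entirely on the sum-of-squares identity $\sum_{i,j}\langle\tilde{T}(\alpha_i^*\alpha_j)h_j,h_i\rangle=\|\sum_j M(\alpha_j)h_j\|^2$ ``since $M$ is a $*$-homomorphism,'' is not justified as written. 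The identity itself happens to be true, but proving it is where the real work lives: one must use that $T$ is a representation to write $\tilde{T}(\alpha_i^*\alpha_j)=T(q_i)^*T(p_j)^*T(p_i)T(q_j)$, use that each $N(p)$ extends $T(p)$ (invariance of $\mathcal{H}$) to lift the compression, and invoke Fuglede's theorem to $*$-commute the normals so that $[N(p_j)^*N(p_i)]=[N(p_i)N(p_j)^*]$ factors as a column times a row and is therefore positive. The paper does exactly this, factoring the operator matrix through $\diag(T(q_1)^*,\dots,T(q_n)^*)$ and reducing to $[T(p_j)^*T(p_i)]\geq 0$; a parallel argument with $X^{1/2}$ and $*$-commutation handles condition (iii). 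You should replace the appeal to multiplicativity of $M$ with this argument; as it stands, if $M$ really were a $*$-homomorphism the theorem would be essentially vacuous.
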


\begin{proof}

\textit{(ii)}$\Longrightarrow$\textit{(i)} is trivial. For the other direction, denote $Q$ be the $*$-semigroup constructed before and let $\tilde{T}:Q\to\bh{H}$ defined by $\tilde{T}(p,q)=T(p)^* T(q)$. For each $p\in P$, denote $N(p)=\begin{bmatrix} T(p) & X_p \\ 0 & Y_p \end{bmatrix}$. Pick $s_i=(p_i,q_i)\in Q$ and $t=(a,b)\in Q$. We shall show that $\tilde{T}$ satisfies all the conditions in Theorem \ref{thm.principal}. 

The first condition of Theorem \ref{thm.principal} is clearly valid. For the second condition:
\begin{align*}
 &[\tilde{T}(s_i^* s_j)] \\
=& [\tilde{T}(q_i p_j, p_i q_j)] \\
=& [T(q_i)^* T(p_j)^* T(p_i) T(q_j)] \\
=& \diag(T(q_1)^*, T(q_2)^*, \cdots, T(q_n)^*) [T(p_j)^* T(p_i)] \diag(T(q_1),\cdots,T(q_n))
\end{align*}
It suffices to show $[T(p_j)^* T(p_i)]\geq 0$. Notice that $\{N(p_i)\}$ is a commuting family of normal operators and thus they also doubly commute (by Fuglede's Theorem). $$[N(p_j)^* N(p_i)]=[N(p_i) N(p_j)^*]=\begin{bmatrix} N(p_1) \\ N(p_2) \\ \vdots \\ N(p_n) \end{bmatrix} \begin{bmatrix} N(p_1)^* & N(p_2)^* & \cdots N(p_n)^* \end{bmatrix}\geq 0.$$
$N(p_i)$ extends $T(p_i)$ and therefore $P_\mathcal{H} N(p_j)^* N(p_i) |_\mathcal{H} = T(p_j)^* T(p_i)$. By projecting on $\mathcal{H}^n$, we get the desired inequality.

For the third condition:
\begin{align*}
 &[\tilde{T}(s_i^* t^* t s_j)]\\
=&[\tilde{T}(q_i p_j ab, ab p_i q_j)] \\
=&[T(ab)^* T(q_i)^* T(p_j)^* T(p_i) T(q_j) T(ab)] \\
=&\diag(T(q_1)^*, T(q_2)^*, \cdots, T(q_n)^*) [T(ab)^* T(p_j)^* T(p_i) T(ab)] \diag(T(q_1),\cdots,T(q_n))
\end{align*} 
Therefore, it suffices to show (with $C_t=1$ in the condition)
$$[T(ab)^* T(p_j)^* T(p_i) T(ab)]\leq [T(p_j)^* T(p_i)]$$
Similar to the previous case, it suffices to show $$[N(ab)^* N(p_j)^* N(p_i) N(ab)]\leq [N(p_j)^* N(p_i)]$$
Let $X=[N(p_j)^* N(p_i)]\geq 0$ and $D=\diag(N(ab),\cdots,N(ab))$. Since $D$ and $X$ $*$-commute, and thus $D$ and  $X^{1/2}$ also $*$-commute. We have 
$$ D^* X D=X^{1/2} D^*D X^{1/2} \leq \|N(ab)\| X.$$

Since $N$ is contractive, this shows $D^* X D\leq X$. Therefore, all conditions in Theorem \ref{thm.principal} are met, and thus there exists a contractive representation $S:Q\to\bh{L}$ such that $\tilde{T}(p,q)=P_\mathcal{H} S(p,q) |_\mathcal{H}$. Denote $M(p)=S(0,p)$. Then $M:P\to\bh{L}$ is a representation of $P$, and moreover, $$T(p)^* T(p) = P_\mathcal{H} S(p,p) |_\mathcal{H}=P_\mathcal{H} M(p)^* M(p) |_\mathcal{H}.$$
By Lemma \ref{lm.1}, we know $M(p)$ extends $T(p)$ and therefore $M$ is a normal extension. 
\end{proof}
\begin{remark} When the semigroup is $P=\mathbb{N}^k$, Theorem \ref{thm.normalmap} is trivial: for a normal map $N:\mathbb{N}^k\to\bh{K}$, one may define a normal representation by sending each generator $e_i$ to $N(e_i)$. However, it is not clear how we can derive a normal representation from a normal map when the semigroup does not have nice generators. For example, we have seen this issue in Example \ref{ex.1} where the semigroup $P=\mathbb{N}\backslash\{1\}$ is finitely generated. This result shows that finding a commuting family of normal extensions for $\{T(p)\}_{p\in P}$ is equivalent of finding a normal representation that extends $T$. 
\end{remark} 
\begin{corollary}\label{cor.semigp} Let $P$ be a commutative unital semigroup generated by $\{p_i\}_{i\in I}$, and $T:P\to\bh{H}$ a unital contractive representation. Then the family $\{T(p_i)\}_{i\in I}$ has commuting normal extensions $\{N_i\}_{i\in I}$ if and only if there exists a normal representation $N:P\to\bh{K}$ such that each $N(p)$ extends $T(p)$.  
\end{corollary}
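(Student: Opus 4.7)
The plan is to reduce to Theorem \ref{thm.normalmap}. The reverse implication is immediate: if $N:P\to\bh{K}$ is a normal representation extending $T$, then $N_i:=N(p_i)$ are commuting normal extensions of the generators. For the forward implication I would construct, from the given commuting normal extensions $\{N_i\}_{i\in I}$, a commuting normal map $\{N(p)\}_{p\in P}$ on all of $P$---that is, a witness to hypothesis (i) of Theorem \ref{thm.normalmap}---and then invoke the theorem to obtain a genuine normal representation.

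The natural construction is as follows: since $\{p_i\}_{i\in I}$ generates $P$, for each $p\in P$ I would choose some decomposition $p=p_{i_1}+\cdots+p_{i_k}$ and set $N(p)=N_{i_1}N_{i_2}\cdots N_{i_k}$. Three properties must then be checked. Normality of each $N(p)$, and mutual commutativity of the family $\{N(p)\}_{p\in P}$, both follow from Fuglede's theorem, which guarantees that the commuting normals $N_i$ doubly commute; consequently any product of them is normal, and any two such products commute. That $N(p)$ extends $T(p)$ follows by writing each $N_i$ in block form $\begin{bmatrix}T(p_i) & * \\ 0 & *\end{bmatrix}$ with respect to $\mathcal{K}=\mathcal{H}\oplus\mathcal{H}^\perp$ and multiplying: the $(1,1)$ entry of $N(p)$ reads $T(p_{i_1})\cdots T(p_{i_k})=T(p)$, and upper block-triangularity persists. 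For contractivity, since each $T(p_i)$ is a subnormal contraction, a standard functional calculus argument on the joint spectral measure of the $N_i$ allows us to assume the $N_i$ themselves are contractive, whence each $N(p)$ is contractive as well.

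The main conceptual obstacle is well-definedness: a single $p\in P$ may admit many decompositions as a sum of generators (for instance, $6=2+2+2=3+3$ in the Neil-algebra semigroup of Example \ref{ex.1}), and different decompositions can a priori yield different operators $N(p)$. This is precisely the obstruction preventing the normal map $N$ from descending to a representation on $\mathcal{K}$ itself, and it is what makes the corollary nontrivial. The construction, however, does not require well-definedness: any arbitrary choice of decompositions still produces a normal map satisfying Theorem \ref{thm.normalmap}(i), and the theorem then resolves the ambiguity by producing a genuine normal representation on a possibly larger Hilbert space $\mathcal{L}\supseteq\mathcal{H}$.
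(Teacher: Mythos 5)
Your proof is correct and takes essentially the same route as the paper: choose an arbitrary decomposition of each $p\in P$ into generators, define $N(p)$ as the corresponding product of the $N_i$ to obtain a commuting normal map extending $T$ (well-definedness as a representation being unnecessary, exactly as you observe), and then invoke Theorem \ref{thm.normalmap}. Your additional attention to contractivity of the $N_i$ addresses a point the paper's own proof passes over silently.
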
 
\begin{proof} The backward direction is obvious. Now assuming $\{T(p_i)\}_{i\in I}$ has commuting normal extension $\{N_i\}_{i\in I}$. For each element $p\in P$, write $p$ as a finite product of $\{p_i\}_{i\in I}$ and define $N(p)$ to be the corresponding product of $T(p_i)$. Since $N_i$ commutes with one another, we obtain a normal map $\overline{N}:P\to\bh{L}$ where $\{\overline{N}(p)\}_{p\in P}$ is a family of commuting normal operators where $\overline{N}(p)$ extends $T(p)$. Theorem \ref{thm.normalmap} implies the existence of the desired normal representation $N$.
\end{proof}

\begin{remark} Corollary \ref{cor.semigp} shows that for a contractive representation $T:P\to\bh{H}$, it suffices to extends the image of $T$ on a set of generators. Since Athavale's result still holds for an infinite family of operators (Corollary \ref{cor.inf}), we may use Condition (\ref{eq.athavale}) to check if the set of generators have a commuting normal extension. However, when the semigroup has too many generators, Condition (\ref{eq.athavale}) is hard to check. We shall give another equivalent condition for an abelian lattice ordered group in the next section. 
\end{remark} 

\section{Normal Extensions For Lattice Ordered Semigroups}

A lattice ordered semigroup $P$ is a unital normal semigroup inside a group $G=P^{-1}P$ that induces a lattice order. Given a unital normal semigroup $P\subseteq G=P^{-1}P$, there is a natural partial order on $G$ given by $x\leq y$ when $x^{-1} y\in P$. If any two elements $g,h$ in $G$ has a least upper bound (also called the join $g\vee h$) and greatest lower bound (also called the meet $g\wedge h$) under this partial order, the partial order is called a lattice order.
\begin{example} (Examples of Lattice Ordered Semigroups)
\begin{enumerate}[(i)]
\item $\mathbb{N}^k$ is a lattice ordered semigroup inside $\mathbb{Z}^k$ for any $k\in\mathbb{N}\bigcup\{\infty\}$. 
\item $\mathbb{R}^+$ is a lattice ordered semigroup inside $\mathbb{R}$. Notice that $\mathbb{R}^+$ is not countably generated.
\item More generally, if the partial order induced by $P$ is a total order, or equivalently, $G=P\bigcup P^{-1}$, then $P$ is also a lattice ordered semigroup in $G$. 
\item If $P_i$ are lattice ordered semigroups inside $G_i$, then their product $\prod P_i$ is also a lattice ordered semigroup inside $\prod G_i$.
\item If $X$ is a topological space and $C^+(X)$ contains all the non-negative continuous function on $X$. Then $C^+(X)$ is a lattice ordered semigroup inside $C(X)$, where the group operation is point-wise addition. 
\item Even though our focus is on abelian lattice ordered semigroups, there are non-abelian lattice ordered semigroups. Consider an uncountable totally ordered set $X$, and define $G$ to be the set of all order preserving bijections on $X$. $G$ is a group under composition. Define $P=\{\alpha\in G: \alpha(x)\geq x\}$, then $P$ is a non-abelian lattice ordered semigroup in $G$\cite{LatticeOrderBookIntro}.
\end{enumerate}
\end{example} 
If $P$ is a lattice ordered semigroup inside $G$, then every element $g\in G$ has a unique positive and negative part $g_+, g_-$, in the sense that $g=g_-^{-1} g_+ $ and $g_+\wedge g_-=e$. This notion of positive and negative part is essential in defining a regular dilation.
For a lattice ordered semigroup $P$ inside $G$, a representation $T:P\to\bh{H}$ has a dilation $U:G\to\bh{K}$ if $U$ is a unitary representation of $G$ on a Hilbert space $\mathcal{K}\supseteq\mathcal{H}$ such that for any $p\in P$, $$T(p)=P_\mathcal{H} U(p)|_\mathcal{H}.$$
Such a dilation is called regular if for any $g\in G$, $$T(g_-)^* T(g_+)=P_\mathcal{H} U(g)|_\mathcal{H}.$$
There is a dual version of regular dilation that call such a dilation $*$-regular if for any $g\in G$, $$T(g_+)T(g_-)^*=P_\mathcal{H} U(g)|_\mathcal{H}.$$

These two definitions are equivalent in the sense that $T$ is $*$-regular if and only if $T^*:P^{-1}\to\bh{H}$ where $T^*(p^{-1})=T(p)^*$ is regular \cite[Proposition 2.5]{Boyu2014}. We call a representation $T$ regular if it has a regular dilation. 

A well known result due to Sarason shows that such Hilbert space $\mathcal{K}$ can be decomposed as $\mathcal{K}=\mathcal{K}_+\oplus\mathcal{H}\oplus\mathcal{K}_-$ where under such such decomposition, $$U(p)=\begin{bmatrix} * & 0 & 0 \\ * & T(p) & 0 \\ * & * & * \end{bmatrix}.$$
Regular dilations were first studied by Brehmer \cite{Brehmer1961} where he gave a necessary and sufficient condition for a representation on $\mathbb{N}^\Omega$ to be regular:
\begin{theorem}[Brehmer]\label{thm.Brehmer} Let $\Omega$ be a set, and denote $\mathbb{Z}^\Omega$ to be the set of $(t_\omega)_{\omega\in\Omega}$ where $t_\omega\in\mathbb{Z}$ and $t_\omega=0$ except for finitely many $\omega$. Also, for a finite set $V\subset\Omega$, denote $e_V\in\mathbb{Z}^\Omega$ to be $1$ at those $\omega\in V$ and $0$ elsewhere. If $\{T_\omega\}_{\omega\in\Omega}$ is a family of commuting contractions, we may define a contractive representation $T:\mathbb{Z}_+^\Omega\to\bh{H}$ by $$T(t_\omega)_{\omega\in\Omega}=\prod_{\omega\in\Omega} T_\omega^{t_\omega}.$$
Then $T$ is regular if and only if for any finite $U\subseteq\Omega$, the operator $$\sum_{V\subseteq U} (-1)^{|V|} T(e_V)^* T(e_V)\geq 0.$$
\end{theorem}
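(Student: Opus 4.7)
The plan is to recast the existence of a regular unitary dilation as positive definiteness of an operator-valued kernel on $G = \mathbb{Z}^\Omega$ and then identify Brehmer's cube inequalities as exactly what forces that positivity. Define $\Psi : G \to \bh{H}$ by $\Psi(g) = T(g_-)^* T(g_+)$. Regarding $G$ as a $*$-semigroup with $g^* = -g$, a Sz.-Nagy-type dilation theorem (essentially Theorem~\ref{thm.principal} with constants $C_g = 1$, admissible because the target is unitary) shows that $T$ admits a regular unitary dilation if and only if for every finite family $g_1,\dots,g_n \in G$ and $h_1,\dots,h_n \in \mathcal{H}$,
\[\sum_{i,j} \langle \Psi(g_j - g_i) h_j, h_i \rangle \geq 0.\]
Indeed, a regular dilation $U$ makes the left side $\bigl\|\sum_i U(g_i) h_i\bigr\|^2$, and conversely the GNS/Sz.-Nagy construction returns a unitary representation $U$ with $\Psi(g) = P_\mathcal{H} U(g)|_\mathcal{H}$, which is precisely the regularity condition.

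For the forward direction I would extract Brehmer's inequality by a careful choice of test data in this positive definiteness. Fix a finite $U \subseteq \Omega$ and $k \in \mathcal{H}$, and for each $V \subseteq U$ set $g_V = e_V$ and $h_V = (-1)^{|V|} T(e_{U \setminus V}) k$. Since $(e_W - e_V)_+ = e_{W \setminus V}$ and $(e_W - e_V)_- = e_{V \setminus W}$, and using $e_{W \setminus V} + e_{U \setminus W} = e_{U \setminus (V \cap W)}$ together with commutativity of $T$, the double sum $\sum_{V,W} \langle \Psi(e_W - e_V) h_W, h_V\rangle$ collapses to $\sum_{V,W} (-1)^{|V|+|W|} \|T(e_{U \setminus (V \cap W)}) k\|^2$. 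Reindexing by $S = V \cap W$ and using $\sum_{A, B \subseteq U \setminus S,\, A \cap B = \emptyset} (-1)^{|A|+|B|} = (-1)^{|U \setminus S|}$, this telescopes to $\sum_{W \subseteq U} (-1)^{|W|} \langle T(e_W)^* T(e_W) k, k\rangle$, which is exactly Brehmer's quantity for $U$ tested against $k$.

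For the backward direction, any finite list $g_1,\dots,g_n \in G$ has support in a finite $\Omega_0 \subseteq \Omega$, and since $\Psi(g_j - g_i)$ depends only on differences, we may translate and assume all $g_i \in \mathbb{Z}_+^{\Omega_0}$. The question becomes whether the cube inequalities on every $U \subseteq \Omega_0$ force $[T(g_i - g_i \wedge g_j)^* T(g_j - g_i \wedge g_j)] \geq 0$. I would proceed by induction on $|\Omega_0|$. The base case $|\Omega_0| = 1$ is Sz.-Nagy's dilation of a single contraction. For the inductive step, isolate a coordinate $\omega_0$; Brehmer's inequality on the $U$ containing $\omega_0$ provides the defect data for a commuting isometric co-extension of $T_{\omega_0}$, while the inductive hypothesis gives a regular dilation of the other coordinates; a combined unitary dilation via It{\^o}'s theorem on commuting isometries then yields the desired regular dilation of $T$.

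The main obstacle is precisely this inductive construction: one must verify that the isometric co-extension along $\omega_0$ is compatible with the regular dilation of the remaining coordinates, i.e., that Brehmer's cube inequalities on $U \subseteq \Omega_0 \setminus \{\omega_0\}$ persist after the enlargement. A cleaner alternative is a direct construction: define a pre-inner product on $\mathcal{H} \otimes \mathbb{C}[G]$ by $\langle h \otimes g, h' \otimes g'\rangle = \langle \Psi(g'-g) h, h'\rangle$ and show it is positive semidefinite by expanding each $\Psi(g_j - g_i)$ as a combination of the cube sums $\sum_{V \subseteq U}(-1)^{|V|} T(e_V)^* T(e_V)$ via a M\"obius-type formula; then taking the quotient and completion produces the regular dilation space.
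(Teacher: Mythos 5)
The paper does not prove this theorem; it is quoted as Brehmer's classical result with a citation, so there is no in-paper argument to compare against and your attempt must stand on its own. Your overall architecture is the right one (and is the classical one): view $\mathbb{Z}^\Omega$ as a group, set $\Psi(g)=T(g_-)^*T(g_+)$, and reduce regularity to positive definiteness of the kernel $(g,g')\mapsto \Psi(g'-g)$ via the Sz.-Nagy/Naimark dilation theorem (Theorem~\ref{thm.principal} with $C_g=1$; condition (iii) is automatic on a group since $a^*a=e$). Your forward direction is complete and correct: the test data $g_V=e_V$, $h_V=(-1)^{|V|}T(e_{U\setminus V})k$, the identity $e_{W\setminus V}+e_{U\setminus W}=e_{U\setminus(V\cap W)}$, and the count $\sum_{A,B\subseteq U\setminus S,\,A\cap B=\emptyset}(-1)^{|A|+|B|}=(-1)^{|U\setminus S|}$ all check out and do collapse the double sum to $\sum_{W\subseteq U}(-1)^{|W|}\langle T(e_W)^*T(e_W)k,k\rangle$.

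The genuine gap is the backward direction, which is the entire content of Brehmer's theorem, and neither of your two routes closes it. The inductive route via It\^{o} founders exactly where you say it does: It\^{o}'s theorem extends commuting isometries to commuting unitaries, but the resulting unitary dilation of $T$ carries no regularity for elements $g$ of mixed sign, and verifying that Brehmer's inequalities for $U\subseteq\Omega_0\setminus\{\omega_0\}$ survive the passage to the co-extension space of $T_{\omega_0}$ is essentially equivalent to the theorem itself — this is not a technical detail to be checked but the heart of the matter. The ``cleaner alternative'' is indeed the standard proof, but the M\"obius-type identity you invoke — the expansion of $\Psi(g_j-g_i)$ (equivalently of $T(a-a\wedge b)^*T(b-a\wedge b)$) as a sum of terms of the form $T(\cdot)^*\,S(U)\,T(\cdot)$ with $S(U)=\sum_{V\subseteq U}(-1)^{|V|}T(e_V)^*T(e_V)\geq 0$, which exhibits the kernel as a sum of manifestly positive kernels — is asserted without proof, and it is precisely the nontrivial combinatorial lemma on which Brehmer's and Sz.-Nagy--Foia\c{s}'s arguments rest. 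As written, the hard implication (cube inequalities $\Rightarrow$ regular dilation) is a plan rather than a proof.
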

Recently, the author extended this result to an arbitrary lattice ordered semigroup (not necessarily abelian) \cite{Boyu2014}:
\begin{theorem}\label{thm.regular} Let $P$ be a lattice ordered semigroup in $G$ and $T:P\to\bh{H}$ be a contractive representation. Define $\tilde{T}:G\to\bh{H}$ by $\tilde{T}(g)=T(g_-)^* T(g_+)$. Then $T$ is regular if and only if for any $p_1,\cdots,p_n\in P$ and $g\in P$ where $g\wedge p_i=e$ for all $i=1,2,\cdots,n$, we have
$$\left[T(g)^* \tilde{T}(p_i p_j^{-1})T(g)\right]\leq \left[\tilde{T}(p_i p_j^{-1})\right].$$
\end{theorem}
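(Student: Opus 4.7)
The plan is to apply Theorem~\ref{thm.principal} to the ambient group $G$, viewed as a $*$-semigroup under $g^*=g^{-1}$, with kernel $\tilde T : G \to \bh{H}$. Hypothesis (i) of Theorem~\ref{thm.principal} is immediate from the identity $(g^{-1})_\pm = g_\mp$, which yields $\tilde T(g^{-1})=\tilde T(g)^*$. Hypothesis (iii) is automatic with $C_a=1$: in a group, $a^{-1}a=e$ collapses both sides of the (iii) inequality to the same operator matrix. Consequently, existence of a regular unitary dilation of $T$ reduces to checking hypothesis (ii): positivity of $[\tilde T(g_i^{-1} g_j)]$ for every finite family in $G$.

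Next I would use the lattice structure to reduce (ii) to families in $P$. Given $g_1,\ldots,g_n\in G$, pick $q\in P$ dominating each negative part $(g_i)_-$ in the lattice (the join of finitely many elements exists), so that $p_i := q g_i \in P$; since $G$ is abelian, $g_i^{-1} g_j = p_i^{-1} p_j$. Condition (ii) then becomes a positivity condition on matrices of the form $[\tilde T(p_i^{-1} p_j)]$ over finite families in $P$; upon taking adjoints entry-wise and relabeling, this is what the stated condition formalizes via $[\tilde T(p_i p_j^{-1})]$.

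The substance of the proof is the equivalence between this positivity and the stated inequality $[T(g)^* \tilde T(p_i p_j^{-1}) T(g)] \leq [\tilde T(p_i p_j^{-1})]$ for $g, p_i \in P$ with $g \wedge p_i = e$. For the forward direction I would use the regular dilation $U$: by abelianness, $U(g)^* U(p_i p_j^{-1}) U(g) = U(p_i p_j^{-1})$ in $\mathcal{K}$, and compressing to $\mathcal{H}^n$ while invoking Sarason's block-triangular form of $U(g)$ for $g\in P$ yields the compressed inequality, with the disjointness $g\wedge p_i = e$ ensuring the Sarason invariant subspaces align favorably with $\tilde T(p_i p_j^{-1})$. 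The reverse direction is the principal obstacle. My plan is an induction on the lattice-theoretic complexity of $\{p_i\}$: at each step I would peel off a common factor $g$ from the $p_i$, apply the stated inequality to descend to a smaller family of reduced elements $\hat p_i$, and iterate until reaching a totally disjoint family whose Gram matrix $[\tilde T(\cdot)]$ is manifestly positive. The delicate point is verifying at each inductive step that the factor $g$ being extracted really does satisfy the disjointness hypothesis against the reduced elements, which hinges on the distributivity of the lattice-ordered group.
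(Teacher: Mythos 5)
First, a point of order: this paper does not prove Theorem~\ref{thm.regular} at all --- it is quoted from \cite{Boyu2014} as a known result, so there is no in-paper proof to compare against. Judged on its own merits, your opening reduction is sound and is indeed the standard framework: viewing $G$ as a $*$-semigroup with $g^*=g^{-1}$, condition (iii) of Theorem~\ref{thm.principal} collapses because $a^*a=e$, the resulting representation of $G$ is automatically unitary, and so regularity of $T$ is equivalent to positive definiteness of $\tilde{T}$, i.e.\ $[\tilde{T}(g_i^{-1}g_j)]\geq 0$ for all finite families in $G$; the translation trick reducing to families in $P$ is also fine. The forward implication (regular $\Rightarrow$ inequality) is the easy half and your sketch via the unitarity of $U(g)$ and the Sarason triangular form is believable, though you never actually pin down where $g\wedge p_i=e$ enters.

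The genuine gap is the reverse implication, which is the entire content of the theorem, and your plan for it both lacks specifics and rests on a false base case. You claim the induction terminates at ``a totally disjoint family whose Gram matrix $[\tilde{T}(\cdot)]$ is manifestly positive.'' But for pairwise disjoint $p_i$ (i.e.\ $p_i\wedge p_j=e$) one has $(p_ip_j^{-1})_+=p_i$ and $(p_ip_j^{-1})_-=p_j$, so $\tilde{T}(p_ip_j^{-1})=T(p_j)^*T(p_i)$, and the matrix $[T(p_j)^*T(p_i)]$ is \emph{not} manifestly positive --- note the index order: the automatically positive matrix is $[T(p_i)^*T(p_j)]$, whereas positivity of $[T(p_j)^*T(p_i)]$ is precisely a doubly-commuting-type condition. (Compare the proof of Theorem~\ref{thm.normalmap} in this paper, where establishing exactly this positivity requires the commuting normal extensions together with Fuglede's theorem.) So the ``totally disjoint'' configuration is where the work lives, not where it ends. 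Beyond that, the induction itself is unspecified: there is no well-founded ``lattice-theoretic complexity'' in a general abelian lattice ordered semigroup such as $\mathbb{R}^+$ or $C^+(X)$ (no atoms, infinite divisibility), so ``peel off a common factor and iterate'' has no termination argument, and you do not say which $g$ is extracted at each step nor verify the disjointness hypothesis for the reduced family. As it stands, the hard direction is a research plan rather than a proof; you would need to reproduce the actual inductive scheme of \cite{Boyu2014} (or Brehmer's argument in the $\mathbb{N}^k$ case) to close it.
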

Although it is observed that Condition (\ref{eq.athavale}) implies a representation $T:\mathbb{N}^m\to\bh{H}$ has regular dilation \cite{A1992}, the converse is not true. However, we shall prove that Athavale's result is equivalent of saying that a certain representation $T^\infty$ is regular. First of all, define $\mathbb{N}^{m\times \infty}$ by taking the product of infinitely many copies of $\mathbb{N}^m$, in other words, $\mathbb{N}^{m\times \infty}$ is the abelian semigroup generated by $(e_{i,j})_{\substack{1\leq i\leq m \\ j\in\mathbb{N}}}$. Consider $T^\infty:\mathbb{N}^{m\times \infty}\to\bh{H}$ where $T^\infty$ sends each generator $e_{i,j}$ to $T_i$. 
\begin{lemma}\label{lm.motive} As defined above, $T^\infty$ is regular if and only if $T$ satisfies the Condition (\ref{eq.athavale}). 
\end{lemma}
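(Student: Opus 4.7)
The plan is to apply Brehmer's theorem (Theorem \ref{thm.Brehmer}) to the representation $T^\infty$ on $\mathbb{N}^\Omega$, where $\Omega=\{1,\dots,m\}\times\mathbb{N}$, and then rewrite the resulting positivity condition in the canonical form of Athavale. Since $T^\infty$ is by definition the contractive representation of $\mathbb{N}^\Omega$ associated to the commuting family $\{T_{i,j}\}_{(i,j)\in\Omega}$ with $T_{i,j}=T_i$, Brehmer tells us that $T^\infty$ is regular if and only if for every finite subset $U\subseteq\Omega$,
$$\sum_{V\subseteq U}(-1)^{|V|}\,T^\infty(e_V)^*T^\infty(e_V)\geq 0.$$
So the whole task reduces to identifying this family of inequalities with Athavale's family (\ref{eq.athavale}).

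Next I would decompose a finite $U\subseteq\Omega$ by its first coordinate: write $U_i=\{j:(i,j)\in U\}$ and $n_i=|U_i|$, so $U$ is uniquely determined by the tuple $(n_1,\dots,n_m)$ (up to the irrelevant choice of which specific indices $j$ appear). Any $V\subseteq U$ decomposes similarly into $V_i\subseteq U_i$ with $k_i=|V_i|$, and $|V|=k_1+\cdots+k_m$. Using that the $T_i$ commute among themselves (hence so do the $T_i^*$ among themselves), I can bring $T^\infty(e_V)=\prod_i T_i^{k_i}$ into the standard order and compute
$$T^\infty(e_V)^*T^\infty(e_V)=T_1^{*k_1}T_2^{*k_2}\cdots T_m^{*k_m}T_m^{k_m}\cdots T_2^{k_2}T_1^{k_1},$$
which depends on $V$ only through the tuple $(k_1,\dots,k_m)$.

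Then I would count: the number of subsets $V\subseteq U$ producing a given tuple $(k_1,\dots,k_m)$ with $0\le k_i\le n_i$ is $\binom{n_1}{k_1}\cdots\binom{n_m}{k_m}$. Substituting into Brehmer's sum collapses it to
$$\sum_{0\leq k_i\leq n_i}(-1)^{k_1+\cdots+k_m}\binom{n_1}{k_1}\cdots\binom{n_m}{k_m}\,T_1^{*k_1}\cdots T_m^{*k_m}T_m^{k_m}\cdots T_1^{k_1},$$
which is precisely the left-hand side of (\ref{eq.athavale}). Since finite subsets $U\subseteq\Omega$ realise every tuple $(n_1,\dots,n_m)\in\mathbb{N}^m$ (one direction) and conversely every $U$ gives some such tuple (the other direction), Brehmer's condition for $T^\infty$ is equivalent, term by term, to Athavale's condition for $T$.

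There is no real obstacle; the proof is essentially a matter of organising indices and invoking commutativity to collapse the labelled sum over $V\subseteq U$ into an unlabelled sum over multiplicities. The only mildly delicate point is the rearrangement producing the canonical order $T_1^{*k_1}\cdots T_m^{*k_m}T_m^{k_m}\cdots T_1^{k_1}$ inside $T^\infty(e_V)^*T^\infty(e_V)$; this uses only that commuting operators commute with each other and with one another's adjoints within the same family (i.e., $T_iT_j=T_jT_i$ implies $T_i^*T_j^*=T_j^*T_i^*$), so no appeal to Fuglede or normality is required.
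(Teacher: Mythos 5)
Your proof is correct and follows essentially the same route as the paper: apply Brehmer's criterion to $T^\infty$ on $\mathbb{N}^{m\times\infty}$, group the subsets $V\subseteq U$ by the multiplicities $(k_1,\dots,k_m)$ of their first coordinates, and use the count $\binom{n_1}{k_1}\cdots\binom{n_m}{k_m}$ together with commutativity to collapse Brehmer's sum into Athavale's sum $(\star)$. Nothing further is needed.
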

\begin{proof} It suffices to verify Condition (\ref{eq.athavale}) is equivalent to Brehmer's condition on $\mathbb{N}^{m\times\infty}$ in Theorem \ref{thm.Brehmer}. For any finite set $U\subseteq \{1,2,\cdots,m\}\times \mathbb{N}$, denote by $n_i$ the number of $u\in U$ whose first coordinate is $i$. For any subset $V\subseteq U$, denote by $k_i$ the number of $v\in V$ whose first coordinate is $i$. It is clear that $0\leq k_i\leq n_i$. Notice that $T(e_V)=T_1^{k_1} T_2^{k_2}\cdots T_m^{k_m}$, and among all subsets of $U$, there are exactly ${n_1\choose k_1}\cdots{n_m\choose k_m}$ subsets $V$ that have $k_i$ elements whose first coordinate is $i$. Therefore, 
\begin{align*}
& \sum_{V\subseteq U} (-1)^{|V|} T(e_V)^* T(e_V) \\ 
=& \sum_{0\leq k_i\leq n_i} (-1)^{k_1+k_2+\cdots+k_m} {n_1\choose k_1}\cdots{n_m\choose k_m} T_1^{*k_1}T_2^{*k_2}\cdots T_m^{*k_m}T_m^{k_m}\cdots T_1^{k_1}.
\end{align*}
Hence, Brehmer's condition holds if and only if $T$ satisfies Condition (\ref{eq.athavale}).
\end{proof}
Notice that Condition (\ref{eq.athavale}) cannot be generalized directly to arbitrary abelian lattice ordered semigroups when the semigroup lacks generators. However, Lemma \ref{lm.motive} motivates us to consider $T^\infty$ in an abelian lattice ordered semigroup: for a lattice ordered semigroup $P$ inside a group $G$, define $P^\infty=\prod_{i=1}^\infty P$ to be the abelian semigroup generated by infinitely many identical copies of $P$. We shall denote $p\otimes \delta_n$ to be $p$ inside the $n$-th copy of $P^\infty$. A typical element of $P^\infty$ can be denoted by $\sum_{i=1}^N p_i\otimes \delta_i$ for some large enough $N$. $P^\infty$ is naturally a lattice ordered semigroup inside the group $G^\infty$, where $$\left(\sum_{i=1}^N p_i\otimes \delta_i\right) \wedge \left(\sum_{i=1}^N q_i\otimes \delta_i\right)=\sum_{i=1}^N p_i\wedge q_i \otimes \delta_i.$$
Our main result shows that $T^\infty$ being regular is equivalent to having a normal extension. 
\begin{theorem}\label{thm.main} Let $T:P\to\bh{H}$ be a contractive representation on an abelian lattice ordered semigroup. Define $T^{\infty}:P^\mathbb{N}\to\bh{H}$ by $T^\infty(p,n)=T(p)$ for any $n$. Then the following are equivalent:
\begin{enumerate}[(i)]
\item $T$ has a contractive normal extension to a representation $N:P\to\bh{K}$. In other words, there exists a contractive normal representation $N:P\to\bh{K}$ such that for all $p\in P$, $T(p)=N(p)|_\mathcal{H}$. 
\item $T^\infty$ is regular.
\end{enumerate}
\end{theorem}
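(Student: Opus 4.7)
The plan is to handle the two directions separately. The easier direction (i) $\Rightarrow$ (ii) builds a regular dilation of $T^\infty$ starting from the normal extension, while the harder direction (ii) $\Rightarrow$ (i) extracts Athavale's condition on every finite subfamily of $\{T(p)\}_{p\in P}$ and then applies Theorem \ref{thm.normalmap}.

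For (i) $\Rightarrow$ (ii), I start by observing that a normal extension $N:P\to\bh{K}$ automatically gives a commuting family of normal contractions $\{N(p)\}_{p\in P}$, since $P$ is abelian and $N$ is a representation. I would then assemble these into a normal representation $N^\infty:P^\infty\to\bh{K}$ extending $T^\infty$ by setting $N^\infty\bigl(\sum p_i\otimes\delta_i\bigr)=\prod N(p_i)$; this is well-defined precisely because the factors commute. The key step is then that any contractive normal representation of a lattice ordered semigroup is automatically regular: via the joint spectral theorem, realize $\{N^\infty(q)\}_{q\in P^\infty}$ as multiplication operators by $\overline{\mathbb{D}}$-valued functions on some measure space, and check the condition of Theorem \ref{thm.regular} pointwise on each fiber, where it reduces to an inequality automatically satisfied by contractive characters. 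Once a regular unitary dilation $U^\infty:G^\infty\to\bh{L}$ of $N^\infty$ is in hand, the fact that $N^\infty$ extends $T^\infty$ --- so that $N^\infty(p)|_\mathcal{H}=T^\infty(p)$ and $P_\mathcal{H} N^\infty(p)^*|_\mathcal{H}=T^\infty(p)^*$ --- yields
\[
P_\mathcal{H} U^\infty(g)|_\mathcal{H}=P_\mathcal{H} N^\infty(g_-)^*N^\infty(g_+)|_\mathcal{H}=T^\infty(g_-)^*T^\infty(g_+)
\]
for every $g\in G^\infty$, so $U^\infty$ is simultaneously a regular dilation of $T^\infty$.

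For (ii) $\Rightarrow$ (i), I aim to verify Athavale's condition (\ref{eq.athavale}) for every finite tuple $(T(p_1),\ldots,T(p_m))$. Given $p_1,\ldots,p_m\in P$ and $n_1,\ldots,n_m\ge 0$, choose distinct coordinates $k(i,j)\in\mathbb{N}$ for $1\le i\le m$ and $1\le j\le n_i$, and set $q_{i,j}:=p_i\otimes\delta_{k(i,j)}\in P^\infty$. Because the $q_{i,j}$ sit in distinct coordinates of $P^\infty$, the sub-semigroup they generate is isomorphic to $\mathbb{N}^N$ with $N=\sum n_i$, and its lattice structure inherited from $G^\infty$ agrees with the intrinsic one: meets, positive parts, and negative parts all decompose componentwise. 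A regular unitary dilation of $T^\infty$ therefore restricts to a regular unitary dilation of this $\mathbb{N}^N$ sub-representation, so Brehmer's condition (Theorem \ref{thm.Brehmer}) with $U=\{(i,j)\}$ gives $\sum_{V\subseteq U}(-1)^{|V|}T^\infty(e_V)^*T^\infty(e_V)\ge 0$, which the combinatorial bookkeeping from Lemma \ref{lm.motive} rewrites as precisely (\ref{eq.athavale}) for $(T(p_1),\ldots,T(p_m))$. With Athavale's condition holding on every finite subfamily, the infinite version of his theorem (Corollary \ref{cor.inf}) produces a single Hilbert space carrying a commuting family of normal extensions $\{N(p)\}_{p\in P}$ of $\{T(p)\}_{p\in P}$, which is a normal map in the sense of Theorem \ref{thm.normalmap}; that theorem then upgrades it to a normal representation extending $T$.

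The main obstacle is the restriction step in (ii) $\Rightarrow$ (i): one must check that the lattice embedding $\mathbb{N}^N\hookrightarrow P^\infty$ defined by $e_{(i,j)}\mapsto q_{i,j}$ faithfully preserves the lattice structure, so that the regularity of $T^\infty$ genuinely descends to the sub-representation. This comes down to the observation that each $q_{i,j}$ is supported at a single distinct coordinate of $P^\infty$, forcing all meets and positive/negative parts to decompose componentwise. The analogous subtlety in the forward direction is the claim that commuting families of normal contractions are always regular, which rests on the joint spectral theorem.
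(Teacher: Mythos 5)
Your direction \textit{(ii)}$\Rightarrow$\textit{(i)} takes a genuinely different route from the paper, but as written it is circular. You reduce regularity of $T^\infty$ to Brehmer's condition on the sub-representations supported on distinct coordinates, hence to Condition (\ref{eq.athavale}) for every finite tuple $(T(p_1),\dots,T(p_m))$ --- this reduction is correct, and your justification (positive/negative parts and meets in $G^\infty$ decompose componentwise, so the regular dilation of $T^\infty$ restricts to a regular dilation of the induced $\mathbb{N}^N$-representation) checks out. The problem is the next step: you invoke Corollary \ref{cor.inf} to pass from ``every finite subfamily of $\{T(p)\}_{p\in P}$ satisfies (\ref{eq.athavale})'' to a single commuting family of normal extensions. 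In this paper Corollary \ref{cor.inf} is \emph{derived from} Theorem \ref{thm.main}; only the finite case (Theorem \ref{thm.Athavale}) is available as an external input, and the finite case does not by itself produce extensions of an infinite family on a common space. The gap is fixable without abandoning your strategy: rather than citing Corollary \ref{cor.inf}, verify conditions (ii) and (iii) of Theorem \ref{thm.principal} for $\tilde{T}(p,q)=T(p)^*T(q)$ on $Q=P\times P$ directly --- each such condition involves only finitely many elements $p_1,\dots,p_n,a,b$ of $P$, so the finite Athavale theorem applied to $\{T(p_1),\dots,T(p_n),T(ab)\}$ supplies the commuting normal extensions needed to run the positivity argument from the proof of Theorem \ref{thm.normalmap}. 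The paper avoids this issue entirely by a different construction: it extracts the operators $U_i(p)=U(p\otimes\delta_i)$ from the regular dilation, shows the defect blocks satisfy $D_j^*D_i=0$ for $i\neq j$, and obtains $N(p)$ as a WOT/SOT${}^*$ limit point of convex combinations $\sum\lambda_iU_i(p)$ with $\|\lambda\|_2\to 0$; normality and the extension property survive the limit, and Theorem \ref{thm.normalmap} finishes. Your route is more conceptual but, as the paper is organized, it proves the theorem from one of its own corollaries.

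For \textit{(i)}$\Rightarrow$\textit{(ii)} your argument is essentially the paper's, except at the key claim that the commuting normal representation $N^\infty$ is regular. The paper gets this cleanly: Fuglede's theorem makes $\{N(p)\}$ doubly commuting, hence $N^\infty$ is Nica-covariant and regular by the cited result in \cite{Boyu2014}. Your proposed substitute --- diagonalize via the joint spectral theorem and check the inequality of Theorem \ref{thm.regular} fibrewise --- is workable, but the fibrewise statement is not ``automatic'': for a contractive character $\chi$ it amounts to positivity of the matrix $[\tilde{\chi}(p_ip_j^{-1})]$, which is exactly the scalar regularity kernel and requires its own (standard but nontrivial) argument. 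I would either supply that argument or simply quote the Nica-covariance result as the paper does. Your final step, transferring the regular dilation of $N^\infty$ down to $T^\infty$ via $P_\mathcal{H}N^\infty(g_-)^*N^\infty(g_+)|_\mathcal{H}=T^\infty(g_-)^*T^\infty(g_+)$, is fine and slightly more direct than the paper's projection of the matrix inequality.
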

\begin{proof}
\textit{(i)}$\Rightarrow$\textit{(ii)}: First of all notice that the family $\{N(p)\}_{p\in P}$ $*$-commutes due to Fuglede's theorem. Define $N^\infty$ by sending $N^\infty(p,n)=N(p)$ for all $p\in P, n\in\mathbb{N}$. Then for any $s,t\in P^\infty$, $N^\infty(s),N^\infty(t)$ are a finite product of operators in $\{N(p)\}_{p\in P}$ and therefore they also $*$-commute. In particular, $N^\infty$ is Nica-covariant and therefore is regular \cite[Theorem 4.1]{Boyu2014}. Since $N$ extends $T$, $N^\infty$ also extends $T^\infty$, and therefore for any $s,t\in P^\infty$, $$P_\mathcal{H} N^\infty(t)^* N^\infty(s)|_\mathcal{H}=T^\infty(t)^* T^\infty(s).$$
$N^\infty$ satisfies the condition in Theorem \ref{thm.regular}, and by projecting onto $\mathcal{H}$, $T^\infty$ also satisfies this condition and thus is regular.

\textit{(ii)}$\Rightarrow$\textit{(i)}: Let $U:G^\infty\to\bh{K}$ be a regular unitary dilation of $T^\infty$, and decompose $\mathcal{K}=\mathcal{K}_+\oplus\mathcal{H}\oplus\mathcal{K}_-$ so that under such decomposition, for each $w\in P^\infty$, $$U(w)=\begin{bmatrix} * & 0 & 0 \\ * & T(w) & 0 \\ * & * & * \end{bmatrix}.$$
Fix $p\in P$, denote $U_i(p)=U(p\otimes\delta_i)$. Under the decomposition $\mathcal{K}=\mathcal{K}_+\oplus\mathcal{H}\oplus\mathcal{K}_-$, let $$U_i(p)=\begin{bmatrix} A_i & 0 &0 \\ B_i & T(p) & 0 \\ C_i & D_i & E_i \end{bmatrix}.$$
First by regularity of $U$, for any $i\neq j$, 
\begin{align*}
T(p)^* T(p) &= P_\mathcal{H} U(p\otimes \delta_i - p\otimes \delta_j)|_\mathcal{H} \\
&= P_\mathcal{H} U_j(p)^* U_i(p)|_\mathcal{H} \\
&= \left. P_\mathcal{H} \begin{bmatrix} A_j^* & B_j^* & C_j^* \\ 0 & T(p)^* & D_j^* \\ 0& 0 & E_j^*\end{bmatrix} \begin{bmatrix} A_i & 0 &0 \\ B_i & T(p) & 0 \\ C_i & D_i & E_i \end{bmatrix}\right|_\mathcal{H} \\
&= \left. P_\mathcal{H} \begin{bmatrix} * & * & * \\ * & T(p)^*T(p)+D_j^* D_i & * \\ *& *& *\end{bmatrix}\right|_\mathcal{H} \\
\end{align*} 
Therefore, each $D_j^* D_i=0$ whenever $i\neq j$. When $i=j$, since $U$ is a unitary representation, $U_i(p)$ is a unitary, and thus $D_i^* D_i=I-T(p)^* T(p)$. Now fix $\epsilon>0$, denote
$$\Lambda_\epsilon=\{\lambda=(\lambda_i)_{i=1}^\infty\in c_{00}: \sum_{i=1}^\infty \lambda_i=1, 0\leq\lambda_i\leq 1, \|\lambda\|_2<\epsilon\}.$$
This set is non-empty since we may let $\lambda_i=\frac{1}{n}$ for $1\leq i\leq n$, and $0$ otherwise. This gives $\|\lambda\|_2=\frac{1}{\sqrt{n}}$, which can be arbitrarily small as $n\to\infty$. For each $\lambda\in\Lambda_\epsilon$, denote $N_\lambda=\sum_{i=1}^\infty \lambda_i U_i(p)$, which converges since $\lambda$ has finite support. Denote $$\mathcal{N}_\epsilon=\{N_\lambda:\lambda\in\Lambda_\epsilon\}$$
Notice that $P_\mathcal{H} N_\lambda|_\mathcal{H} = \sum_{i=1}^\infty \lambda_i T(p) = T(p)$. Therefore, under the decomposition $\mathcal{K}=\mathcal{K}_+\oplus\mathcal{H}\oplus\mathcal{K}_-$, $$N_\lambda=\begin{bmatrix} A_\lambda & 0& 0 \\ B_\lambda & T(p) & 0 \\ C_\lambda & D_\lambda & E_\lambda \end{bmatrix}.$$
Here, $D_\lambda=\sum_{i=1}^\infty \lambda_i D_i$ and thus
\begin{align*}
D_\lambda^* D_\lambda =& \sum_{i,j=1}^\infty \overline{\lambda_i}\lambda_j D_i^* D_j \\
=& \sum_{i=1}^\infty |\lambda_i|^2 D_i^* D_i \\
\end{align*}
Here we used the fact that $D_i^* D_j=0$ whenever $i\neq j$. Note that each $D_i^* D_i=I-T(p)^*T(p)$, which is contractive. Hence, $$\|D_\lambda^* D_\lambda\|\leq \|\lambda\|_2^2<\epsilon^2$$

Each $N_\lambda$ is a convex combination of $U_i$ and thus is contained in the convex hull of $U_i$, which is also contained in the unit ball in $\bh{K}$. Observe that each $\mathcal{N}_\epsilon$ is also convex. Therefore, the convexity implies their SOT${}^*$ and WOT closures agree (here, $SOT^*-lim T_n=T$ if $T_n$ and $T_n^*$ converges to $T$ and $T^*$ respectively in SOT.). Hence, 
$$\overline{\mathcal{N}_\epsilon}^{SOT^*}=\overline{\mathcal{N}_\epsilon}^{WOT}\subseteq \overline{\operatorname{conv}}^{WOT}\{U_i\}\subseteq b_1(\bh{K})$$
The Banach Alaoglu theorem gives $b_1(\bh{K})$ is WOT-compact, and therefore $\overline{\mathcal{N}_\epsilon}^{WOT}$ is a decreasing nest of WOT-compact sets. By the Cantor intersection theorem, $$\bigcap_{\epsilon>0} \overline{\mathcal{N}_\epsilon}^{SOT^*}=\bigcap_{\epsilon>0} \overline{\mathcal{N}_\epsilon}^{WOT}\neq \emptyset$$
Pick $N(p) \in \bigcap_{\epsilon>0} \overline{\mathcal{N}_\epsilon}^{SOT^*}$. Then for any $\epsilon>0$, we can choose a net $(N_\lambda)_{\lambda\in I_\epsilon}$, where $I_\epsilon\subseteq\Lambda_\epsilon$, such that $SOT^*-\lim_{I_\epsilon} N_\lambda = N(p) $ and thus $SOT^*-\lim_{I_\epsilon} N_\lambda^*=N(p)^*$. Now both $N_\lambda,N_\lambda^*$ are uniformly bounded by 1 since they are all contractions. Hence, their product is SOT-continuous.
\begin{align*}
SOT-\lim_\Lambda N_\lambda^* N_\lambda &= N(p)^* N(p) \\
SOT-\lim_\Lambda  N_\lambda N_\lambda^* &= N(p) N(p)^* 
\end{align*}
But since $U_i$ are commuting unitaries and thus $*$-commute, $N_\lambda$ is normal. Hence, $N(p)^* N(p)=N(p)N(p)^*$ and $N(p)$ is normal. 

Consider $N(p)\in\bh{K}$ under the decomposition $\mathcal{K}=\mathcal{K}_+\oplus\mathcal{H}\oplus\mathcal{K}_-$, each entry must be the WOT-limit of $(N_\lambda)_{\lambda\in I_\epsilon}$ and therefore it has the form $$N(p)=\begin{bmatrix} A(p) & 0 & 0 \\ B(p) & T(p) & 0 \\ C(p) & D(p) & E(p) \end{bmatrix}.$$
Since $(D_\lambda)_{\lambda\in I_\epsilon}$ WOT-converges to $D(p)$, and for each $\lambda\in\Lambda_\epsilon$, $\|D_\lambda\|<\epsilon$. Therefore, $\|D(p)\|<\epsilon$ for every $\epsilon>0$ and thus $D(p)=0$. Hence $\mathcal{H}$ is invariant for $N(p)$, whence $N(p)$ is a normal extension for $T(p)$. 

The procedure above gives a normal map $N:P\to\bh{K}$ where each $N(p)$ is a normal contraction that extends $T(p)$. Notice $N(p)$ is a WOT-limit of convex combinations of $\{U_i(p)\}_{i\in\mathbb{N}}$, where the family $\{U_i(p)\}_{i,p}$ is commuting since $P$ is abelian. Any convex combination of $\{U_i(p)\}_{i\in\mathbb{N}}$ also commutes with any convex combination of $\{U_i(q)\}_{i\in\mathbb{N}}$. Therefore, $\{N(p)\}_{p\in P}$ is also a commuting family of normal operators. By Theorem \ref{thm.normalmap}, there exists a normal representation $N:P\to\bh{L}$ that extends $T$. 
\end{proof} 

As an immediate corollary, Theorem \ref{thm.Athavale} can be extended to any family of commuting contractions $\{T(\omega)\}_{\omega\in\Omega}$ by considering Brehmer's condition on $\mathbb{N}^{\Omega\times\infty}$. 

\begin{corollary}\label{cor.inf} Let $\{T_i\}_{i\in I}$ be a family of commuting contractions. Then there exists a family of commuting normal contractions $\{N_i\}_{i\in I}$ that extends $\{T_i\}_{i\in I}$ if and only if for any finite set $F\subseteq I$, $\{T_i\}_{i\in F}$ satisfies Condition (\ref{eq.athavale}). 
\end{corollary}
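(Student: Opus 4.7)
The forward implication is essentially a restriction argument: if $\{N_i\}_{i\in I}$ are commuting normal contractions extending $\{T_i\}_{i\in I}$, then for any finite $F\subseteq I$, the subfamily $\{N_i\}_{i\in F}$ is a commuting normal extension of $\{T_i\}_{i\in F}$, so Theorem~\ref{thm.Athavale} yields Condition~(\ref{eq.athavale}) for $\{T_i\}_{i\in F}$. For the reverse direction, my plan is to package the given data as a representation of a lattice ordered semigroup and invoke Theorem~\ref{thm.main}. Specifically, let $P=\bigoplus_{i\in I}\mathbb{N}$ sitting inside $G=\bigoplus_{i\in I}\mathbb{Z}$, which is an abelian lattice ordered semigroup. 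The commuting contractions $\{T_i\}_{i\in I}$ determine a unique contractive representation $T:P\to\bh{H}$ sending each generator $e_i$ to $T_i$. If $T$ can be shown to admit a contractive normal extension $N:P\to\bh{K}$, then setting $N_i:=N(e_i)$ gives the desired commuting normal extensions.

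By Theorem~\ref{thm.main}, it suffices to verify that the induced representation $T^\infty:P^\infty\to\bh{H}$ is regular, where $P^\infty=\bigoplus_{(i,n)\in I\times\mathbb{N}}\mathbb{N}$ is itself a free abelian (hence lattice ordered) semigroup. Since $P^\infty$ is generated by the commuting contractions $\{T_i\}_{(i,n)\in I\times\mathbb{N}}$ (with the $n$-copy of $T_i$ equal to $T_i$), Brehmer's theorem (Theorem~\ref{thm.Brehmer}) applies directly: $T^\infty$ is regular if and only if for every finite subset $U\subseteq I\times\mathbb{N}$,
\[
\sum_{V\subseteq U} (-1)^{|V|}\, T^\infty(e_V)^*\, T^\infty(e_V)\geq 0.
\]
Now, given such a $U$, let $F\subseteq I$ be its (finite) projection onto the first coordinate, and let $n_i$ count the elements of $U$ whose first coordinate is $i\in F$. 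The same combinatorial bookkeeping as in the proof of Lemma~\ref{lm.motive}---grouping subsets $V\subseteq U$ by the vector $(k_i)_{i\in F}$ counting how many of their elements lie above each $i$, and recalling that $T^\infty(e_V)=\prod_{i\in F} T_i^{k_i}$---shows that the above sum equals exactly the left-hand side of Condition~(\ref{eq.athavale}) for the finite family $\{T_i\}_{i\in F}$ with exponents $(n_i)_{i\in F}$.

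By hypothesis this expression is positive for every finite $F\subseteq I$ and every choice of $(n_i)$, so Brehmer's criterion is satisfied; hence $T^\infty$ is regular, Theorem~\ref{thm.main} produces a contractive normal representation $N:P\to\bh{K}$ extending $T$, and the operators $N_i=N(e_i)$ are the required commuting normal extensions. There is no real obstacle here: the argument is essentially a bookkeeping translation, and the only point requiring (mild) care is to observe that Brehmer's condition is inherently \emph{local}---it only ever involves finitely many generators at a time---so that the finite-subset hypothesis on $I$ is exactly what is needed despite $I$ being arbitrary.
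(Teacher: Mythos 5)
Your proposal is correct and follows exactly the route the paper intends: the paper presents this as an immediate corollary ``by considering Brehmer's condition on $\mathbb{N}^{\Omega\times\infty}$,'' which is precisely your reduction via Theorem~\ref{thm.main} applied to $P=\bigoplus_{i\in I}\mathbb{N}$ together with the combinatorial bookkeeping of Lemma~\ref{lm.motive}. Your write-up simply makes explicit the steps the paper leaves to the reader.
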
 

It is known that isometric representations of lattice ordered semigroups are automatically regular \cite[Corollary 3.8]{Boyu2014}. Therefore, if $T:P\to\bh{H}$ is an isometric representation, then $T^\infty:P^\infty\to\bh{H}$ is also an isometric representation and thus $T$ has a subnormal extension.

\begin{corollary} Every isometric representation of an abelian lattice ordered semigroup has a contractive subnormal extension. 
\end{corollary}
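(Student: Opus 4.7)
The plan is to reduce the corollary to Theorem \ref{thm.main} by checking that $T^\infty$ is regular. The key input is the external fact cited in the paragraph preceding the corollary: every isometric representation of a (not necessarily abelian) lattice ordered semigroup is automatically regular \cite[Corollary 3.8]{Boyu2014}. So the entire argument is a chain: $T$ isometric implies $T^\infty$ isometric, which implies $T^\infty$ regular, which by Theorem \ref{thm.main} implies $T$ has a contractive normal extension.

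The only nontrivial verification is that $T^\infty$ really is isometric. Recall from the construction just before Theorem \ref{thm.main} that $P^\infty$ is itself an (abelian) lattice ordered semigroup inside $G^\infty$, so the hypothesis of \cite[Corollary 3.8]{Boyu2014} applies there. For an arbitrary element $w = \sum_{i=1}^N p_i \otimes \delta_i \in P^\infty$, the multiplicativity of $T^\infty$ together with its definition on generators gives
\[
T^\infty(w) \;=\; \prod_{i=1}^N T^\infty(p_i \otimes \delta_i) \;=\; \prod_{i=1}^N T(p_i).
\]
Since $P$ is abelian and $T$ is a representation, the operators $\{T(p)\}_{p\in P}$ form a commuting family, and because $T$ is isometric they are commuting isometries. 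A finite product of commuting isometries is again an isometry (the adjoint computation collapses factor by factor using $T(p)^* T(p) = I$), hence $T^\infty(w)$ is an isometry for every $w \in P^\infty$.

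With $T^\infty$ confirmed to be an isometric representation of the lattice ordered semigroup $P^\infty$, applying \cite[Corollary 3.8]{Boyu2014} yields that $T^\infty$ is regular, and Theorem \ref{thm.main} then delivers the desired contractive normal representation $N: P \to \bh{K}$ extending $T$. I do not anticipate any real obstacle: the corollary is essentially a one-line consequence of Theorem \ref{thm.main} once the isometric-implies-regular theorem is invoked, and the only bookkeeping needed is the observation — resting entirely on commutativity of $P$ — that the isometric property transfers from $T$ to $T^\infty$.
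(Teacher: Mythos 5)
Your proposal is correct and follows exactly the route the paper takes: the paper's (one-sentence) argument is precisely that $T$ isometric implies $T^\infty$ isometric, hence regular by \cite[Corollary 3.8]{Boyu2014}, hence $T$ has a normal extension by Theorem \ref{thm.main}. Your only extra remark — that commutativity is needed for the product of isometries to be an isometry — is actually superfluous (any product of isometries is an isometry, since $\|STx\|=\|Tx\|=\|x\|$; commutativity is only needed for $T^\infty$ to be well defined), but this does not affect correctness.
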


\bibliographystyle{plain}
\bibliography{Subnormality}

\end{document}